\documentclass[12pt]{tran-l}

\usepackage{amssymb}

\newtheorem{theorem}{Theorem}[section]

\newtheorem{corollary}[theorem]{Corollary}

\theoremstyle{definition}

\newtheorem{example}[theorem]{Example}

\newtheorem{question}[theorem]{Question}

\theoremstyle{remark}

\numberwithin{equation}{section}

\usepackage{biblatex}
\usepackage{hyperref}
\usepackage{mathtools}

\begin{document}

\title[On the Shalit--Shamovich Spectral radius]{On the Shalit--Shamovich Spectral Radius}

\author[Maximilian Tornes]{Maximilian Tornes}
\address{Dept. of Mathematics, University of Manitoba, Winnipeg, Canada}
\curraddr{}
\email{maximilian.tornes@umanitoba.ca}
\thanks{The author is partially supported by a University of Manitoba Graduate Fellowship}

\subjclass[]{47A13, 46L07, 46L52}

\keywords{Joint spectral radius, Operator Space, multi-variable operator theory, Simultaneous similarity}

\dedicatory{}

\maketitle

\begin{abstract}
    We show that the spectral radius introduced by Shalit and Shamovich uniquely determines the underlying operator space structure: If $\mathcal{E}_1$ and $\mathcal{E}_2$ are operator space structures on $\mathbb{C}^d$ such that the corresponding spectral radii $\rho_{\mathcal{E}_1}$ and $\rho_{\mathcal{E}_2}$ agree on every $d$-tuple of operators, then the identity map is completely isometric. This answers a question of Scherer, Shalit and Shamovich.  

    Moreover, we obtain an analogous result for the minimal spectral radii, and showcase an application to the completely bounded isomorphism problem for algebras of bounded non-commutative functions on operator space unit balls.
\end{abstract}

\section{Introduction}

In \cite{ShalitShamovichSpectral}, Shalit and Shamovich associate with every operator space structure $\mathcal{E}$ on $\mathbb{C}^d$ a spectral radius function $\rho_\mathcal{E}$ on $d$-tuples of operators. Their main result, showing the ingenuity of their definition, says that a $d$-tuple $X=(X_1,\dots,X_d)\in M_n(\mathbb{C}^d)\cong M_n(\mathbb{C})^d$ of square matrices is jointly similar to an element of the open unit ball of $M_n(\mathcal{E})$ if and only if $\rho_\mathcal{E}(X)<1$. This result also holds for tuples of operators (see also \cite{SchererShalitShamovichspectral} for a proof when $\mathcal{E}$ is infinite-dimensional). When $\mathcal{E}$ is taken to be the row operator space, then $\rho_\mathcal{E}$ coincides with the well-known spectral radius considered by Bunce \cite{BunceModels}, Popescu \cite{PopescuSimilarity}, and others. 

Following up on this work, Scherer, Shalit and Shamovich \cite{SchererShalitShamovichspectral} investigated how the spectral radius $\rho_\mathcal{E}$ depends on the operator space structure $\mathcal{E}$. Among other results, they prove that the spectral radius of a commuting tuple $X$ only depends on the underlying normed space. Moreover, they show that if $\mathcal{E}_1$ and $\mathcal{E}_2$ are two selfadjoint operator space structures quantizing the same involutive Banach space structure $V$ on $\mathbb{C}^d$, then $\rho_{\mathcal{E}_1}(X)=\rho_{\mathcal{E}_2}(X)$ for all matrix tuples $X$ implies $\mathcal{E}_1=\mathcal{E}_2$. In addition, they pose the following question about the general case.

\begin{question}[{\cite[Question 1.1]{SchererShalitShamovichspectral}}]\label{question}
    Let $\mathcal{E}_1$ and $\mathcal{E}_2$ be operator space structures on $\mathbb{C}^d$ such that for every $d$-tuple of operators $X$, $\rho_{\mathcal{E}_1}(X)=\rho_{\mathcal{E}_2}(X)$. Must the identity map be completely isometric?
\end{question}

The purpose of this note is to answer Question \ref{question} affirmatively. This will be done in Corollary \ref{cor:main_result}, which is a direct consequence of Theorem \ref{thm:technical_result}, in which we explicitly construct for a given operator $d$-tuple $X$, an operator $d$-tuple $T$ such that $\rho_{\mathcal{E}}(T)=\|X\|_\mathcal{E}$ for every operator space structure $\mathcal{E}$ on $\mathbb{C}^d$. We will also prove that Question \ref{question} still has a positive answer if one replaces $\rho_{\mathcal{E}_1}$ and $\rho_{\mathcal{E}_2}$ by the minimal spectral radii as defined by Shalit and Shamovich. All necessary definitions will be recalled in Section \ref{sec:Background}. As an application, in Section \ref{sec:application} we show that if $A(\mathbb{B}_{\mathcal{E}_1})=A(\mathbb{B}_{\mathcal{E}_2})$ as sets of functions, with the identity being completely bounded, then the identity is necessarily completely isometric (which in turn is equivalent to the identity map $\mathcal{E}_1\to\mathcal{E}_2$ being completely isometric). Here, $A(\mathbb{B}_{\mathcal{E}_i})$ denotes the algebra of bounded analytic non-commutative functions on the open unit ball $\mathbb{B}_{\mathcal{E}_i}$ of $\mathcal{E}_i$ that extend to uniformly continuous functions on the closed unit ball.

\begin{example}\label{ex:row_column}
    Denote by $\mathcal{E}_\mathrm{row}$ and $\mathcal{E}_\mathrm{col}$ the row and column operator space structures on $\mathbb{C}^d$, respectively. In this case, one has $\rho_{\mathcal{E}_\mathrm{row}}(X)=\rho_{\mathcal{E}_\mathrm{col}}(X)$ for all matrix tuples $X$. There is, however, a tuple of operators on which the two spectral radii do not agree. Moreover, the identity map on $\mathbb{C}^d$ does not induce a completely isometric map $\mathcal{E}_\mathrm{row}\to \mathcal{E}_\mathrm{col}$ for $d\geq 2$ (See \cite[Example 5.2]{SchererShalitShamovichspectral} for details).

    Therefore, going back to Question \ref{question}, the fact that $\rho_{\mathcal{E}_1}(X)=\rho_{\mathcal{E}_2}(X)$ for all operator tuples $X$, not just all matrix tuples $X$, must form an integral part of any proof that yields an affirmative answer to Question \ref{question}. And indeed, the tuple $T$ we construct in Theorem \ref{thm:technical_result} acts on an infinite dimensional Hilbert space. 
\end{example}

\section{Background}\label{sec:Background}

Let us recall how Shalit and Shamovich define their spectral radius functions in \cite{ShalitShamovichSpectral}. We refer to \cite{PisierBook,PaulsenBook} for background on operator spaces and completely bounded maps. Let $\mathcal{E}$ be an operator space and denote by $\iota$ the completely isometric embedding $\iota:\mathcal{E}\to C^*_\mathrm{max}(\mathcal{E})$. We also use $\iota$ to denote ampliations $B(K)\otimes_\mathrm{min} \mathcal{E}\to B(K)\otimes_\mathrm{min}C^*_\mathrm{max}(\mathcal{E})$ of this inclusion. The spectral radius of an element $X\in B(K)\otimes_\mathrm{min} \mathcal{E}$ is then defined as its spectral radius when viewed as an element of the $C^*$-algebra $B(K)\otimes_\mathrm{min} C^*_\mathrm{max}(\mathcal{E})$:
\begin{align}\label{eq:spectral_rad_def}
    \rho_\mathcal{E}(X)=\rho(\iota(X))=\lim_{n\to\infty}\|\iota(X)^n\|^{1/n}.
\end{align}
When $\mathcal{E}$ is finite dimensional, viewed as $\mathbb{C}^d$ equipped with a family of matrix norms (all of which we denote by $\|\cdot\|_\mathcal{E}$), one can obtain a more concrete, coordinate dependent version of this spectral radius. For $j=1,\dots,d$, denote by $Q_j$ the image of the standard basis vector $e_j\in\mathbb{C}^d$ under a completely isometric embedding $\mathcal{E}=\mathbb{C}^d\to B(H)$. This identification also determines a norm on $d$-tuples of operators $X=(X_1,\dots,X_d)$, given by $\|X\|_\mathcal{E}:=\|\sum_{j=1}^d X_j\otimes Q_j\|$. Next, one defines for $X=(X_1,\dots,X_d)\in B(K)^d$ the spectral radius as
\begin{align}\label{eq:equality_of_radii}
    \rho_\mathcal{E}(X)=\rho_Q(X):=\rho\left(\sum_{j=1}^d X_j\otimes \iota(Q_j)\right).
\end{align}
Recall that $\rho$ denotes the ordinary spectral radius of the $C^*$-algebra $B(K)\otimes_\mathrm{min} C^*_\mathrm{max}(\mathcal{E})$. Shalit and Shamovich show that $\rho_Q$, which is called the $Q$-\emph{spectral radius}, is given by
\begin{align}\label{eq:Q_spectral_radius}
    \rho_Q(X)=\lim_{n\to\infty}\left\|\sum_{|\omega|=n} X^\omega\otimes Q_{\omega_1}\otimes_h\cdots\otimes_h Q_{\omega_n}\right\|^{1/n}.
\end{align}
In \eqref{eq:Q_spectral_radius}, the sum is taken over all words $\omega=\omega_1\omega_2\cdots\omega_n$ of length $n$ in the letters $\omega_j\in\{1,\dots,d\}$, with $X^\omega=X_{\omega_1}X_{\omega_2}\cdots X_{\omega_n}$. The norm in \eqref{eq:Q_spectral_radius} is the norm in $B(K)\otimes_\mathrm{min}(\mathcal{E}\otimes_h \cdots\otimes_h \mathcal{E})$. Here, $\mathcal{E}\otimes_h\cdots\otimes_h \mathcal{E}$ is the $n$-fold Haagerup tensor product of $\mathcal{E}$ with itself (We refer the reader to \cite[Chapter 5]{PisierBook} for background on the Haagerup tensor product). When $\mathcal{E}$ is the row operator space on $\mathbb{C}^d$, then \eqref{eq:Q_spectral_radius} agrees with the spectral radius considered by Bunce, Popescu, and others (see \cite[Example 2.10]{ShalitShamovichSpectral}). We will use $\mathbb{B}_\mathcal{E}$ to denote the unit ball of $\mathcal{E}$, i.e., the set of all $d$-tuples $X$ of square matrices of any size, for which $\|X\|_\mathcal{E}<1$. Similarly, $\overline{\mathbb{B}_\mathcal{E}}$ denotes the set of all matrix $d$-tuples $X$ for which $\|X\|_\mathcal{E}\leq 1$. The main strength of the spectral radius \eqref{eq:equality_of_radii} is that it detects when a matrix tuple is jointly similar to an element of the unit ball of $\mathcal{E}$.
 \begin{theorem}[{Shalit--Shamovich \cite{ShalitShamovichSpectral}}]\label{thm:main_SS_result}
    A $d$-tuple of matrices $X=(X_1,\dots,X_d)\in M_n(\mathbb{C}^d)$ satisfies $\rho_\mathcal{E}(X)<1$ if and only if there is an invertible matrix $S\in \mathrm{GL}_n$ such that $SXS^{-1}=(SX_1S^{-1},\dots,SX_dS^{-1})\in \mathbb{B}_\mathcal{E}$.
\end{theorem}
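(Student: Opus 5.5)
The plan is to treat the two implications separately. The forward direction is essentially formal. The converse will go through Paulsen's similarity theorem for completely bounded homomorphisms of operator algebras.

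\emph{Similar to the ball implies $\rho_\mathcal{E}(X)<1$.} Suppose $Y=SXS^{-1}\in\mathbb{B}_\mathcal{E}$. Since $\iota$ is linear and acts only on the $\mathcal{E}$-leg, we have
\begin{align*}
\iota(SXS^{-1})=(S\otimes 1)\,\iota(X)\,(S^{-1}\otimes 1)
\end{align*}
in $M_n\otimes_{\min} C^*_{\max}(\mathcal{E})$. The spectral radius in a Banach algebra is invariant under similarity, so $\rho_\mathcal{E}(X)=\rho(\iota(Y))$. Moreover $\rho(\iota(Y))\le\|\iota(Y)\|=\|Y\|_\mathcal{E}<1$, because $\iota$ is completely isometric.

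\emph{$\rho_\mathcal{E}(X)<1$ implies similar to the ball.}

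1. \emph{Geometric decay.} Write
\begin{align*}
X^{(k)}=\sum_{|\omega|=k}X^\omega\otimes Q_{\omega_1}\otimes_h\cdots\otimes_h Q_{\omega_k}\in M_n(\mathcal{E}^{\otimes_h k}).
\end{align*}
By formula \eqref{eq:Q_spectral_radius}, choosing $\rho_\mathcal{E}(X)<r<r'<1$ gives a constant $C$ with $\|X^{(k)}\|\le Cr^k$ for all $k$.

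2. \emph{The algebra and the homomorphism.} Consider the operator algebra $\mathcal{A}$ generated by $1$ and $\iota(Q_1),\dots,\iota(Q_d)$ inside $C^*_{\max}(\mathcal{E})$. I expect, using the Christensen--Effros--Sinclair/Blecher--Paulsen description of the universal algebra of an operator space, that the homogeneous degree-$k$ part of $\mathcal{A}$ is completely isometric to $\mathcal{E}^{\otimes_h k}$; this is what underlies \eqref{eq:Q_spectral_radius}. Define the unital homomorphism $\pi:\mathcal{A}\to M_n$ by $\pi(\iota(Q_j))=X_j/r'$.

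3. \emph{Bounding the degree-$k$ parts.} On degree $k$, the map $\pi$ sends $Q_{\omega_1}\otimes\cdots\otimes Q_{\omega_k}$ to $X^\omega/r'^k$. Since the target is $M_n$, its cb norm equals the norm of the associated tensor in $M_n\otimes_{\min}(\mathcal{E}^{\otimes_h k})^{*}$. This should be dominated by $\|X^{(k)}\|/r'^k$, up to a constant depending only on $n$, via the duality between $M_n(V)$ and $CB(V^*,M_n)$. Hence $\|\pi|_{\deg k}\|_{cb}\le C'(r/r')^k$.

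4. \emph{Gluing the pieces.} The gauge action $\iota(Q_j)\mapsto z\,\iota(Q_j)$, $|z|=1$, extends to $*$-automorphisms of $C^*_{\max}(\mathcal{E})$ by universality. Averaging against $\bar z^k$ therefore gives completely contractive projections onto the degree-$k$ parts. It follows that $\|\pi\|_{cb}\le\sum_k C'(r/r')^k<\infty$ on the dense subalgebra of polynomials, and hence on $\mathcal{A}$.

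5. \emph{Paulsen's theorem.} Paulsen's theorem then yields an invertible $S\in GL_n$ such that $S\pi(\cdot)S^{-1}$ is completely contractive. Restricting to the degree-one part, which is $\mathcal{E}$, gives $\|SXS^{-1}\|_\mathcal{E}\le r'<1$, as required.

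The main obstacle is step 3 together with step 2: making precise that the degree-$k$ slice of the algebra generated by $\mathcal{E}$ is $\mathcal{E}^{\otimes_h k}$, and that the cb norm of the induced map is controlled by $\|X^{(k)}\|$ in the tensor-product norm of \eqref{eq:Q_spectral_radius} rather than by some dual norm. If that identification proves awkward, I would instead try to build $S$ directly, Rota-style. The candidate is $S^*S=\sum_k (X^{(k)})^*X^{(k)}$, interpreted via Haagerup factorizations of $X^{(k)}$, and one would check the contraction estimate by hand using the multiplicativity of the Haagerup norm.
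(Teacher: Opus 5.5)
\textbf{Verdict: there is a genuine gap.} Your forward implication and Step~1 are fine, but Steps~2, 3 and~5 are built on the wrong algebra, and the argument fails as written.

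\emph{What goes wrong.} The subalgebra of $C^*_{\max}(\mathcal{E})$ generated by the $\iota(Q_j)$ is $\mathrm{OA}_u(\mathcal{E})$. A linear map $\mathcal{E}\to M_n$, $e_j\mapsto Y_j$, has cb norm $\bigl\|\sum_j Y_j\otimes\delta_j\bigr\|_{M_n(\mathcal{E}^*)}$, where $\delta_1,\dots,\delta_d$ is the dual basis.
\begin{itemize}
\item In Step~5, restricting to degree one therefore bounds the $\mathcal{E}^*$-norm of $SXS^{-1}$, not $\|SXS^{-1}\|_{\mathcal{E}}$.
\item In Step~3 the same mismatch appears. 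There $\|\pi|_{\deg k}\|_{\mathrm{cb}}$ is the norm of $\sum_\omega X^\omega\otimes\delta_{\omega_1}\otimes\cdots\otimes\delta_{\omega_k}$ in $M_n((\mathcal{E}^*)^{\otimes_h k})$. By contrast, $\|X^{(k)}\|$ pairs the same coefficients with $e_{\omega_1}\otimes\cdots\otimes e_{\omega_k}$ in $M_n(\mathcal{E}^{\otimes_h k})$. Passing between the two can cost as much as $\|J\|_{\mathrm{cb}}^k$, where $J\colon e_j\mapsto\delta_j$. That is exponential in $k$, not a constant depending only on $n$, and it is exactly the ``dual norm'' issue you flagged but did not resolve.
\end{itemize}

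\emph{A counterexample.} Take $d=n=1$, $\mathcal{E}=\mathbb{C}$ with $\|e_1\|=1/2$, and $X_1=3/2$. Then $\rho_{\mathcal{E}}(X)=3/4<1$. However, $\|\iota(Q_1)^k\|=2^{-k}$ while $\pi(\iota(Q_1)^k)=(3/(2r'))^k$. So your $\pi$ is not even bounded.

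\emph{The repair is to dualize.}
\begin{itemize}
\item Work in $\mathrm{OA}_u(\mathcal{E}^*)\cong A(\mathbb{B}_{\mathcal{E}})$, recalled in Section~4. Its generators are the coordinate functionals $\delta_j=Z_j$; set $\pi(\delta_j)=X_j/r'$.
\item Its degree-$k$ part is $(\mathcal{E}^*)^{\otimes_h k}$. This is completely isometric to $(\mathcal{E}^{\otimes_h k})^*$ by self-duality of the Haagerup tensor product in finite dimensions.
\item The complete isometry $\mathrm{CB}((\mathcal{E}^{\otimes_h k})^*,M_n)=M_n(\mathcal{E}^{\otimes_h k})$ then gives exactly
\[
\|\pi|_{\deg k}\|_{\mathrm{cb}}=\|X^{(k)}\|/(r')^k\le C(r/r')^k,
\]
with no extra constant.
\item Your gauge-averaging Step~4 and the appeal to Paulsen's theorem go through unchanged.
\item In Step~5 the degree-one part is now $\mathcal{E}^*$. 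Complete contractivity of $\delta_j\mapsto SX_jS^{-1}/r'$ on $\mathcal{E}^*$ is precisely $\|SXS^{-1}\|_{\mathcal{E}}\le r'<1$.
\end{itemize}

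This corrected argument is essentially the mechanism behind the Scherer--Shalit--Shamovich lemmas invoked in Section~4. The paper itself gives no proof of the statement and only quotes it from Shalit--Shamovich. Your Rota-style fallback is too vague to serve as an alternative.
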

Theorem \ref{thm:main_SS_result} also holds for operator tuples in place of matrix tuples \cite[Theorem 2.7]{ShalitShamovichSpectral}, and even when $\mathcal{E}$ is infinite dimensional \cite[Theorem 2.1]{SchererShalitShamovichspectral}.

In \cite{ShalitShamovichSpectral}, Shalit and Shamovich also introduce a different notion of spectral radius: Given operators $Q_1,\dots,Q_d\in B(H)$ as above, the $\emph{minimal $Q$-spectral radius}$ of a tuple $X=(X_1,\dots,X_d)\in B(K)^d$ is
\begin{align}\label{eq:minimal_Q_spectral_radius}
    \rho_Q^\mathrm{min}(X)=\lim_{n\to\infty} \left\|\sum_{|\omega|=n} X^\omega\otimes Q_{\omega_1}\otimes\cdots\otimes Q_{\omega_n}\right\|^{1/n}. 
\end{align}
Here, every tensor product is the spatial (minimal) tensor product of operators. In general, a tensor product without a subscript will always stand for the spatial (minimal) tensor product. When we take $\mathcal{E}=\mathrm{min}(\ell^\infty_d)$, i.e., the minimal operator space structure over $\ell^\infty_d=(\mathbb{C}^d,\|\cdot\|_\infty)$, then the spectral radius in \eqref{eq:minimal_Q_spectral_radius} agrees with spectral radius studied by Rota and Strang \cite{RotaStrang}. In general, the $Q$-spectral radius and the minimal $Q$-spectral radius differ (See \cite[Example 2.11]{ShalitShamovichSpectral}). Going forward, whenever $\mathcal{E}$ is an operator space structure on $\mathbb{C}^d$ and $Q_1,\dots,Q_d$ are as above, we will write $\rho_\mathcal{E}^\mathrm{min}$ instead of $\rho_Q^\mathrm{min}$.
\section{Main Result}\label{sec:main_result}

As described in the introduction, for every operator $d$-tuple $X$, we will construct an operator $d$-tuple $T$ such that $\rho_\mathcal{E}(T)=\|X\|_\mathcal{E}$ for all operator space structures $\mathcal{E}$ on  $\mathbb{C}^d$. If $\{Q_1,\dots,Q_d\}$ are operators that form a basis of a given $\mathcal{E}$ when viewed as a concrete subspace of some $B(H)$, and $X=(X_1,\dots,X_d)$ is an operator $d$-tuple, we will write $Q(X)$ in place of $\sum_{j=1}^d X_j\otimes Q_j$.

\begin{theorem}\label{thm:technical_result}
    Let $X=(X_1,\dots,X_d)\in B(H)^d$ be a tuple of operators on some Hilbert space $H$. Then there exists an infinite dimensional Hilbert space $K$ and a tuple of operators $T=(T_1,\dots,T_d)\in B(K)^d$ with the following property: For every operator space structure $\mathcal{E}$ on $\mathbb{C}^d$, every basis $\{Q_1,\dots,Q_d\}$ of $\mathcal{E}$, and all $n\in\mathbb{N}$, 
    \begin{align*}
        \left\|\sum_{|\omega|=n} T^\omega\otimes Q_{\omega_1}\otimes_h\cdots\otimes_h Q_{\omega_n}\right\|= \|Q(X)\|^n,
    \end{align*}
    and 
     \begin{align*}
        \left\|\sum_{|\omega|=n} T^\omega\otimes Q_{\omega_1}\otimes\cdots\otimes Q_{\omega_n}\right\|= \|Q(X)\|^n.
    \end{align*}
\end{theorem}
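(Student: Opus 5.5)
We construct $T$ as a weighted shift of Fock-space type. Take $K=\ell^2(\mathbb{N}_0)\otimes H^{\otimes\infty}$, or more concretely $K=\bigoplus_{k\ge 0} H^{\otimes k}$ with $H^{\otimes 0}=\mathbb{C}$. The model is the "tensor shift": $T_j$ maps the $k$-th summand into the $(k+1)$-st. The goal is that products $T^\omega$ with $|\omega|=n$ carry $X_{\omega_1}\otimes\cdots\otimes X_{\omega_n}$ (spatial tensor product on different tensor legs), up to a shift in degree. The operator $\sum_{|\omega|=n}T^\omega\otimes Q_{\omega_1}\otimes_h\cdots\otimes_h Q_{\omega_n}$ then looks like $\sum_\omega X_{\omega_1}\otimes\cdots\otimes X_{\omega_n}\otimes Q_{\omega_1}\otimes_h\cdots\otimes_h Q_{\omega_n}$. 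This element should have norm $\|Q(X)\|^n$, both in the Haagerup and in the spatial version.

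Concretely, I would set $T_j=\bigoplus_k (X_j\otimes I_{H^{\otimes k}})\circ S_k$, where $S_k:H^{\otimes k}\to H^{\otimes (k+1)}$ is not an isometry but is chosen so that the $X$'s act on fresh legs. An alternative that avoids choosing $S_k$ is to put commuting legs directly: let $K=\ell^2(\mathbb{Z})\otimes\bigotimes_{i\in\mathbb{Z}}H$ (an infinite tensor product with respect to a unit vector, or the $\ell^2$-direct sum of finite tensor products). Define $T_j=U\otimes X_j^{(0)}$ followed by a shift of legs, with $X_j^{(i)}$ denoting $X_j$ on leg $i$. A cleaner finite version is to fix $n$, take $K_n=\mathbb{C}^{n+1}\otimes H^{\otimes n}$, and let
\[
T_j^{(n)}=\sum_{i=1}^{n}E_{i-1,i}\otimes X_j^{(i)}.
\]
Then $T^\omega=E_{0,n}\otimes X_{\omega_1}^{(1)}\cdots X_{\omega_n}^{(n)}$ for $|\omega|=n$. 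Products of length $m\le n$ similarly give sums of matrix units times tensor products on consecutive legs. Finally take $T=\bigoplus_N T^{(N)}$, so that $K$ is infinite dimensional.

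The key step is the norm identity. Write $Y=Q(X)=\sum_j X_j\otimes Q_j\in B(H)\otimes \mathcal{E}$. For $m\le N$, the term of degree $m$ is a block shift whose blocks are, after the canonical shuffle, $\sum_{|\omega|=m}X_{\omega_1}^{(i+1)}\cdots X_{\omega_m}^{(i+m)}\otimes Q_{\omega_1}\otimes_h\cdots\otimes_h Q_{\omega_m}$, tensored with identities on the other legs. Each such block is, up to complete isometry, the element $Y\odot Y\odot\cdots\odot Y$, where $\odot$ multiplies the $\mathcal{E}$-parts via $\otimes_h$ and tensors the $B(H)$-parts spatially. The blocks sit on disjoint row/column positions, so the norm equals the block norm. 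The claim then reduces to
\[
\Bigl\|\sum_{|\omega|=m}X_{\omega_1}\otimes\cdots\otimes X_{\omega_m}\otimes Q_{\omega_1}\otimes_h\cdots\otimes_h Q_{\omega_m}\Bigr\|=\|Y\|^m,
\]
and similarly with $\otimes$ in place of $\otimes_h$. For the spatial version this is immediate: after a shuffle the element equals $Y\otimes Y\otimes\cdots\otimes Y$ in the spatial tensor product, and $\|A\otimes B\|=\|A\|\|B\|$.

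The main obstacle is the Haagerup version. There, "$\le$" follows from the complete contractivity of the bilinear map $(a\otimes e,b\otimes f)\mapsto (a\otimes b)\otimes(e\otimes_h f)$. This is the standard fact that $\|\sum_k x_k\otimes y_k\|_h\le\|[x_k]\|_{\text{row}}\|[y_k]\|_{\text{col}}$, applied after writing $Y$ as a row in the first factor and a column in the second. I would derive it via the Christensen--Effros--Sinclair/Paulsen--Smith representation: $Q_{\omega_1}\otimes_h\cdots\otimes_h Q_{\omega_m}$ is realized completely isometrically as $\pi_1(Q_{\omega_1})\cdots\pi_m(Q_{\omega_m})$ for suitable representations into $B(L)$ (the free product embedding). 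For "$\ge$" I would use the canonical complete contraction $\mathcal{E}\otimes_h\cdots\otimes_h\mathcal{E}\to\mathcal{E}\otimes_{\min}\cdots\otimes_{\min}\mathcal{E}$. This bounds the Haagerup norm from below by the spatial norm, which equals $\|Y\|^m$ by the previous step. The two inequalities together give the first identity of Theorem \ref{thm:technical_result}, and the spatial computation gives the second.
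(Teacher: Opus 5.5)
Your proof is correct and uses the same mechanism as the paper. You arrange that $T^\omega$ acts as $X_{\omega_1}\otimes\cdots\otimes X_{\omega_n}$ on distinct tensor legs of a direct sum of tensor powers of $H$, then bound the Haagerup norm above by $\|Q(X)\|^n$ using submultiplicativity of the Haagerup product, and below by the spatial norm, which equals $\|Q(X)^{\otimes n}\|=\|Q(X)\|^n$.

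The only real difference is the model. You use the nilpotent block shift $\sum_i E_{i-1,i}\otimes X_j^{(i)}$, while the paper uses the cyclic twist $T_j^{(k)}=(X_j\otimes I\otimes\cdots\otimes I)U_k$. Your choice forces blockwise bookkeeping on every summand, whereas the paper gets the upper bound in one step from $\|Q(T)\|=\|Q(X)\|$. That identity also holds for your $T$, since the blocks $E_{i-1,i}\otimes\sum_j X_j^{(i)}\otimes Q_j$ occupy distinct rows and columns, so you could shortcut the bookkeeping the same way.
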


\begin{proof}
    Let $k\in \mathbb{N}$ and let $U_k$ be the unitary operator on $H^{\otimes k}=H\otimes \cdots \otimes H$, the $k$-fold tensor product of $H$ with itself, which implements the permutation $a_1\otimes a_2\otimes \cdots\otimes  a_k\mapsto a_k\otimes a_1 \otimes a_2\otimes \cdots\otimes a_{k-1}$. Next, for $j=1,\dots,d$, define
    \begin{align*}
        T_j^{(k)}=(X_j\otimes I\otimes \cdots \otimes I)U_k \in B(H^{\otimes k}),
    \end{align*}
    and $T^{(k)}=(T_1^{(k)},\dots, T_d^{(k)})$. Then, for every word $\omega=\omega_1\omega_2\cdots\omega_k$ of length $k$ in the letters $\omega_j\in\{1,\dots,d\}$, we have
    \begin{align*}
        (T^{(k)})^\omega=T_{\omega_1}^{(k)}T_{\omega_2}^{(k)}\cdots T_{\omega_k}^{(k)}=X_{\omega_1}\otimes X_{\omega_2}\otimes\cdots \otimes X_{\omega_k}.
    \end{align*}
    This can be readily checked by applying $(T^{(k)})^\omega$ to elementary tensors. For $j=1,\dots,d$, define the following operator:
    \begin{align*}
        T_j=\bigoplus_{k=1}^\infty T_j^{(k)}\in B\left(\bigoplus_{k=1}^\infty H^{\otimes k}\right).
    \end{align*}
    Moreover, define $T=(T_1,\dots,T_d)$.
    Let $\mathcal{E}$ be an operator space structure on $\mathbb{C}^d$ and let $\{Q_1,\dots,Q_d\}$ be a basis of $\mathcal{E}$. A calculation shows that $\|Q(T^{(k)})\|=\|Q(X)\|$ for all $k\in\mathbb{N}$. Hence, $\|Q(T)\|=\|Q(X)\|$. Fix $n\in\mathbb{N}$. The equality $\|Q(T)\|=\|Q(X)\|$, together with the definition of the Haagerup tensor product \cite[Equation (5.6)]{PisierBook} yields
    \begin{align}\label{eq:mainthm1}
        \left\|\sum_{|\omega|=n} T^\omega\otimes Q_{\omega_1}\otimes_h\cdots\otimes_h Q_{\omega_n}\right\|\leq \|Q(X)\|^n.
    \end{align}
    Next, we use the fact that the Haagerup tensor norm dominates the minimal tensor norm to get
    \begin{align}\label{eq:mainthm2}
         &\left\|\sum_{|\omega|=n} T^\omega\otimes Q_{\omega_1}\otimes_h\cdots\otimes_h Q_{\omega_n}\right\|\\
         \geq&  \left\|\sum_{|\omega|=n} T^\omega\otimes Q_{\omega_1}\otimes\cdots\otimes Q_{\omega_n}\right\|.\nonumber
    \end{align} 
    Finally, using the definition of $T$,
    \begin{align}\label{eq:mainthm3}
         &\left\|\sum_{|\omega|=n} T^\omega\otimes Q_{\omega_1}\otimes\cdots\otimes Q_{\omega_n}\right\|\\
         \geq &  \left\|\sum_{|\omega|=n} (T^{(n)})^\omega\otimes Q_{\omega_1}\otimes\cdots\otimes Q_{\omega_n}\right\|\nonumber\\
         =&  \left\|\sum_{|\omega|=n}X_{\omega_1}\otimes\cdots\otimes X_{\omega_n} \otimes Q_{\omega_1}\otimes\cdots\otimes Q_{\omega_n}\right\|\nonumber\\
         =& \left\|\sum_{|\omega|=n}X_{\omega_1}\otimes Q_{\omega_1} \otimes \cdots\otimes X_{\omega_n}\otimes Q_{\omega_n}\right\|\nonumber\\
         =& \left\|\left(\sum_{j=1}^d X_j\otimes Q_j\right)^{\otimes n}\right\|=\|Q(X)\|^n.\nonumber
    \end{align}
    The claim now follows by combining (\ref{eq:mainthm1}), (\ref{eq:mainthm2}) and (\ref{eq:mainthm3}).
\end{proof}

\begin{corollary}
     Let $X=(X_1,\dots,X_d)\in B(H)^d$ be a tuple of operators on some Hilbert space $H$. Then there exists an infinite dimensional Hilbert space $K$ and a tuple of operators $T=(T_1,\dots,T_d)\in B(K)^d$ such that $\rho_\mathcal{E}(T)=\rho_\mathcal{E}^\mathrm{min}(T)=\|X\|_\mathcal{E}$ for all operator space structures $\mathcal{E}$ on $\mathbb{C}^d$.
\end{corollary}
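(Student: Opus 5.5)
The corollary should follow directly from Theorem \ref{thm:technical_result} by taking $n$-th roots and letting $n\to\infty$. Given $X\in B(H)^d$, let $K$ and $T$ be the space and tuple produced by Theorem \ref{thm:technical_result}. The essential point is that the construction of $T$ is independent of $\mathcal{E}$ and of the chosen basis. So a single tuple $T$ works simultaneously for every operator space structure on $\mathbb{C}^d$.

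Fix an operator space structure $\mathcal{E}$ on $\mathbb{C}^d$. Choose a completely isometric embedding $\mathcal{E}\to B(H')$, and let $Q_j$ be the image of the standard basis vector $e_j$. Then $\{Q_1,\dots,Q_d\}$ is a basis of $\mathcal{E}$, and by definition $\|X\|_\mathcal{E}=\|Q(X)\|$.

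By formula \eqref{eq:Q_spectral_radius} of Shalit and Shamovich, together with \eqref{eq:equality_of_radii}, we have
\begin{align*}
\rho_\mathcal{E}(T)=\rho_Q(T)=\lim_{n\to\infty}\Bigl\|\sum_{|\omega|=n}T^\omega\otimes Q_{\omega_1}\otimes_h\cdots\otimes_h Q_{\omega_n}\Bigr\|^{1/n}.
\end{align*}
The first identity of Theorem \ref{thm:technical_result} says that the quantity inside the limit equals $\|Q(X)\|$ for every $n$. Hence $\rho_\mathcal{E}(T)=\|Q(X)\|=\|X\|_\mathcal{E}$.

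The minimal radius is handled the same way. By the definition \eqref{eq:minimal_Q_spectral_radius} and the convention $\rho_\mathcal{E}^{\mathrm{min}}=\rho_Q^{\mathrm{min}}$, the quantity $\rho^{\mathrm{min}}_\mathcal{E}(T)$ is the limit of $n$-th roots of the spatial tensor norms. By the second identity of the theorem, each of these $n$-th roots equals $\|Q(X)\|$, so $\rho^{\mathrm{min}}_\mathcal{E}(T)=\|X\|_\mathcal{E}$ as well.

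There is no real obstacle, only bookkeeping. One point to note is that the limits are of constant sequences, so their existence is automatic here. The other is that the value $\|X\|_\mathcal{E}$, and the radii themselves, must not depend on which completely isometric realization $Q$ of $\mathcal{E}$ is chosen. That independence is part of the Shalit--Shamovich setup: both sides are expressed through the fixed matrix norms of $\mathcal{E}$ and of its Haagerup or spatial tensor powers. Alternatively, it is immediate because Theorem \ref{thm:technical_result} holds for every basis.
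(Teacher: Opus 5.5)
Your proposal is correct and follows essentially the same route as the paper's proof. You fix a completely isometric realization $Q$ of $\mathcal{E}$ and write $\rho_\mathcal{E}(T)$ and $\rho^{\mathrm{min}}_\mathcal{E}(T)$ as limits of $n$-th roots via \eqref{eq:Q_spectral_radius}, \eqref{eq:equality_of_radii} and \eqref{eq:minimal_Q_spectral_radius}; Theorem \ref{thm:technical_result}, applied to the single $\mathcal{E}$-independent tuple $T$, then makes these sequences constantly equal to $\|Q(X)\|=\|X\|_\mathcal{E}$.
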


\begin{proof}
    Let $X=(X_1,\dots,X_d)\in B(H)^d$ and let $T=(T_1,\dots,T_d)$ be as in Theorem \ref{thm:technical_result}. 
    Next, for a given operator space structure $\mathcal{E}$ on $\mathbb{C}^d$, let $J$ be a Hilbert space such that $\mathcal{E}\subset B(J)$ completely isometrically, and for $j=1,\dots,d$ let $Q_j$ be the image of the standard basis vector $e_j\in\mathbb{C}^d$ under this embedding. Then, $\|X\|_{\mathcal{E}}=\|Q(X)\|=\|\sum_{j=1}^d X_j\otimes Q_j\|$, and  
    \begin{align*}
        \rho_{\mathcal{E}}(T)=\lim_{n\to \infty}\left\|\sum_{|\omega|=n} T^\omega\otimes Q_{\omega_1}\otimes_h \cdots\otimes_h Q_{\omega_n}\right\|^{1/n},
    \end{align*}
    according to \eqref{eq:Q_spectral_radius} and \eqref{eq:equality_of_radii}. Using Theorem \ref{thm:technical_result}, we conclude that $\rho_\mathcal{E}(T)=\|X\|_\mathcal{E}$.

    The statement about the minimal spectral radius follows similarly from its definition (see \eqref{eq:minimal_Q_spectral_radius}) and Theorem \ref{thm:technical_result}.
\end{proof}

As an immediate consequence, we obtain the promised answer to Question \ref{question}.

\begin{corollary}\label{cor:main_result}
    Let $\mathcal{E}_1$ and $\mathcal{E}_2$ be operator space structures on $\mathbb{C}^d$. If $\rho_{\mathcal{E}_1}(X)=\rho_{\mathcal{E}_2}(X)$ for every $d$-tuple of operators $X$, then the identity map $\mathcal{E}_1\to \mathcal{E}_2$ is completely isometric.

    The statement remains true if we replace the spectral radii $\rho_{\mathcal{E}_1}$ and $\rho_{\mathcal{E}_2}$ by the minimal spectral radii $\rho_{\mathcal{E}_1}^\mathrm{min}$ and $\rho_{\mathcal{E}_2}^\mathrm{min}$.
\end{corollary}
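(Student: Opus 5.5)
The plan is to reduce the statement to the preceding corollary, which produces for each operator tuple $X$ a tuple $T$ with $\rho_\mathcal{E}(T)=\rho_\mathcal{E}^\mathrm{min}(T)=\|X\|_\mathcal{E}$ for \emph{every} operator space structure $\mathcal{E}$ on $\mathbb{C}^d$. The essential point is that $T$ is built from $X$ alone, independently of $\mathcal{E}$, so the same $T$ works simultaneously for $\mathcal{E}_1$ and $\mathcal{E}_2$.

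The steps are as follows. First recall that the identity map $\mathcal{E}_1\to\mathcal{E}_2$ is completely isometric precisely when $\|X\|_{\mathcal{E}_1}=\|X\|_{\mathcal{E}_2}$ for every $n$ and every $X\in M_n(\mathbb{C}^d)\cong M_n(\mathbb{C})^d$. Here $\|X\|_{\mathcal{E}_i}=\|\sum_j X_j\otimes Q^{(i)}_j\|$, where $Q^{(i)}_j$ is the image of $e_j$ under a completely isometric embedding of $\mathcal{E}_i$; this is exactly the $n$-th matrix norm of $\sum_j X_j\otimes e_j\in M_n(\mathcal{E}_i)$. Now fix such a matrix tuple $X$, viewed as an operator tuple on $H=\mathbb{C}^n$. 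Apply the preceding corollary to obtain $T\in B(K)^d$. The hypothesis then gives
\begin{align*}
\|X\|_{\mathcal{E}_1}=\rho_{\mathcal{E}_1}(T)=\rho_{\mathcal{E}_2}(T)=\|X\|_{\mathcal{E}_2}.
\end{align*}
Since $X$ and $n$ were arbitrary, the identity is completely isometric. The minimal version is verbatim the same, using $\rho^\mathrm{min}_{\mathcal{E}_i}(T)=\|X\|_{\mathcal{E}_i}$ from the same corollary.

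There is no real obstacle, but one point needs care: the hypothesis must be applied to the infinite-dimensional tuple $T$, not to matrix tuples. This is consistent with Example \ref{ex:row_column}, which shows that agreement on matrix tuples alone would not suffice. A second point to check is that $\rho^\mathrm{min}_\mathcal{E}$ is well defined, i.e.\ independent of the choice of completely isometric embedding. This holds because $\rho^\mathrm{min}_\mathcal{E}$ is computed from minimal tensor norms of $\sum_j X_j\otimes Q_j$-type expressions, which depend only on the operator space structure. In any case, the corollary already gives the needed identity for the standard-basis images in any embedding, so this causes no difficulty.
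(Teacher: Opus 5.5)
Your proposal is correct and matches the paper's route exactly: the paper presents this corollary as an immediate consequence of the preceding one. The $\mathcal{E}$-independent tuple $T$ built from a matrix tuple $X$ gives $\|X\|_{\mathcal{E}_1}=\rho_{\mathcal{E}_1}(T)=\rho_{\mathcal{E}_2}(T)=\|X\|_{\mathcal{E}_2}$ at every matrix level, and likewise $\rho^{\mathrm{min}}_{\mathcal{E}_i}(T)=\|X\|_{\mathcal{E}_i}$ handles the minimal spectral radii.
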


\section{An Application}\label{sec:application}

    In \cite[Section 8]{SchererShalitShamovichspectral}, Scherer, Shalit and Shamovich investigate non-commutative (NC) function theoretic consequences of $\rho_{\mathcal{E}_1}=\rho_{\mathcal{E}_2}$. NC function theory can be thought of as a generalization of several variable complex analysis to analytic functions in several non-commuting variables (see \cite{kaliuzhnyi2014foundations} for a detailed treatment). Polynomials in several non-commuting variables are basic examples of NC functions. For a given operator space structure $\mathcal{E}$ on $\mathbb{C}^d$, denote by $\mathcal{O}(\mathbb{B}_\mathcal{E})$ the algebra of all NC functions $f$ defined on $\mathbb{B}_\mathcal{E}$, such that $f$ is uniformly bounded on $r\overline{\mathbb{B}_\mathcal{E}}$ for every $0<r<1$, equipped with the topology of uniform convergence on closed sub-balls. Scherer, Shalit and Shamovich show that $\rho_{\mathcal{E}_1}(X)=\rho_{\mathcal{E}_2}(X)$ for all operator $d$-tuples $X$ implies $\mathcal{O}(\mathbb{B}_{\mathcal{E}_1})=\mathcal{O}(\mathbb{B}_{\mathcal{E}_2})$ as sets of functions, with the identity map being a complete homeomorphism.  

    Moreover, they ask whether it is possible to obtain the equality $A(\mathbb{B}_{\mathcal{E}_1})=A(\mathbb{B}_{\mathcal{E}_2})$ from equality of the corresponding spectral radii (see \cite[Remark 8.5]{SchererShalitShamovichspectral}). Here, for a given operator space structure $\mathcal{E}$ on $\mathbb{C}^d$, $A(\mathbb{B}_{\mathcal{E}})$ denotes the closure of the NC polynomials in $H^\infty(\mathbb{B}_{\mathcal{E}})$, the Banach algebra of bounded NC functions on $\mathbb{B}_{\mathcal{E}}$ equipped with the supremum norm $\|f\|=\sup_{X\in\mathbb{B}_{\mathcal{E}}}\|f(X)\|$. The algebra $A(\mathbb{B}_\mathcal{E})$ can be viewed as an NC analogue of the disk algebra $A(\mathbb{D})$. By \cite{SampatShalitClassification}, $A(\mathbb{B}_{\mathcal{E}})$ equals the set of bounded NC functions on $\mathbb{B}_{\mathcal{E}}$ that extend to uniformly continuous functions on $\overline{\mathbb{B}_{\mathcal{E}}}$. Moreover, $A(\mathbb{B}_{\mathcal{E}})$ is completely isometrically isomorphic to $\mathrm{OA}_u(\mathcal{E}^*)$, the universal unital operator algebra generated by the operator space dual $\mathcal{E}^*$ of $\mathcal{E}$ (see \cite[Section 7.1]{SampatShalitClassification}). Scherer, Shalit and Shamovich prove that $\rho_{\mathcal{E}_1}(X)=\rho_{\mathcal{E}_2}(X)$ for all matrix $d$-tuples $X$ is not sufficient to imply $A(\mathbb{B}_{\mathcal{E}_1})=A(\mathbb{B}_{\mathcal{E}_2})$. This is done by considering the row and column operator spaces that we encountered in Example \ref{ex:row_column}. Corollary \ref{cor:main_result} yields an answer to the aforementioned question from \cite[Remark 8.5]{SchererShalitShamovichspectral}:  It is true that if $\rho_{\mathcal{E}_1}(X)=\rho_{\mathcal{E}_2}(X)$ for every $d$-tuple of operators $X$, then $A(\mathbb{B}_{\mathcal{E}_1})=A(\mathbb{B}_{\mathcal{E}_2})$.

    The problem of classifying algebras of bounded analytic functions, commutative as well as non-commutative, has been of interest to several authors in recent decades (see \cite{SalomonShalitSurvey}, for example, and the references therein for the isomorphism problem for complete Pick algebras). In the non-commutative setting, if $\mathfrak{V}$ denotes an NC subvariety of some operator space unit ball $\mathbb{B}_\mathcal{E}$, one of the goals is to classify the algebras $A(\mathfrak{V})$ and $H^\infty(\mathfrak{V})$ up to completely bounded or completely isometric isomorphism. The completely bounded and completely isometric isomorphism problem for subvarieties of the row ball has been investigated by Salomon, Shalit and Shamovich \cite{SSS1,SSS2}. Classification up to completely isometric isomorphism for subvarieties of general operator space unit balls $\mathbb{B}_\mathcal{E}$ has been taken up by Sampat and Shalit \cite{SampatShalitClassification,SampatShalitweak}. What remains is the completely bounded isomorphism problem for subvarieties of general operator space unit balls $\mathbb{B}_\mathcal{E}$. The results of Section \ref{sec:main_result} have the following application to this problem.

\begin{corollary}
    If $\mathcal{E}_1$ and $\mathcal{E}_2$ are operator space structures on $\mathbb{C}^d$, then the following are equivalent:
    \begin{enumerate}
        \item $A(\mathbb{B}_{\mathcal{E}_1})=A(\mathbb{B}_{\mathcal{E}_2})$ as sets of functions, with the identity being completely bounded;
        \item $A(\mathbb{B}_{\mathcal{E}_1})=A(\mathbb{B}_{\mathcal{E}_2})$ as sets of functions, with the identity being completely isometric;
        \item the identity map $\mathcal{E}_1\to\mathcal{E}_2$ is completely isometric.
    \end{enumerate}
\end{corollary}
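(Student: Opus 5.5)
The plan is to prove (3)$\Rightarrow$(2)$\Rightarrow$(1)$\Rightarrow$(3). The first two implications are essentially formal. If the identity $\mathcal{E}_1\to\mathcal{E}_2$ is completely isometric, then $\|X\|_{\mathcal{E}_1}=\|X\|_{\mathcal{E}_2}$ for every matrix tuple $X$. Hence $\mathbb{B}_{\mathcal{E}_1}=\mathbb{B}_{\mathcal{E}_2}$ as NC sets, and so $H^\infty(\mathbb{B}_{\mathcal{E}_1})=H^\infty(\mathbb{B}_{\mathcal{E}_2})$ with identical matrix norms, these being suprema over the same sets of points. Taking closures of the NC polynomials gives (2). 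Alternatively, one can invoke $A(\mathbb{B}_{\mathcal{E}})\cong\mathrm{OA}_u(\mathcal{E}^*)$ from \cite{SampatShalitClassification}. The implication (2)$\Rightarrow$(1) is trivial.

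The substance is (1)$\Rightarrow$(3), and I will reduce it to Corollary \ref{cor:main_result}. Assume $A(\mathbb{B}_{\mathcal{E}_1})=A(\mathbb{B}_{\mathcal{E}_2})$ with the identity completely bounded in both directions. The inverse is also completely bounded, by the open mapping theorem at each matrix level, or by the assumed symmetric hypothesis. I will try to show $\rho_{\mathcal{E}_1}(X)=\rho_{\mathcal{E}_2}(X)$ for every operator tuple $X$.

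The key tool is the universal property of $A(\mathbb{B}_\mathcal{E})\cong \mathrm{OA}_u(\mathcal{E}^*)$: completely contractive unital representations of $A(\mathbb{B}_\mathcal{E})$ on $B(K)$ correspond to tuples $X\in B(K)^d$ with $\|X\|_\mathcal{E}\le 1$, via $z_j\mapsto X_j$. Completely bounded representations, by Paulsen's similarity theorem, are exactly those similar to completely contractive ones. Combined with Theorem \ref{thm:main_SS_result} in its operator version \cite[Theorem 2.7]{ShalitShamovichSpectral}, this gives the following characterization: $\rho_\mathcal{E}(X)<1$ iff $X$ is similar to a tuple in the open ball. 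Consequently, if $\rho_{\mathcal{E}_1}(X)<1$, then $X$ is similar to a strict $\mathcal{E}_1$-contraction. This yields a completely bounded representation of $A(\mathbb{B}_{\mathcal{E}_1})$, hence, via the completely bounded identity, of $A(\mathbb{B}_{\mathcal{E}_2})$. Paulsen's theorem then makes $X$ similar to a tuple in $\overline{\mathbb{B}}_{\mathcal{E}_2}$ at operator level, so $\rho_{\mathcal{E}_2}(X)\le 1$. Applying this to $X/r$ for $r>\rho_{\mathcal{E}_1}(X)$ and using homogeneity gives $\rho_{\mathcal{E}_2}\le\rho_{\mathcal{E}_1}$; symmetry gives equality. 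Corollary \ref{cor:main_result} then yields (3).

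The main obstacle is making precise that the representation $z_j\mapsto X_j$ is defined on all of $A(\mathbb{B}_{\mathcal{E}_1})$ and is completely bounded when $X$ is only similar to a point of the open ball, and transferring this across the identity map. First I will define it on polynomials, where the bound $\|p(X)\|\le\|S\|\|S^{-1}\|\|p\|$ holds at all matrix levels, and extend by density. For the reverse step, I will check that the universal property identifies completely contractive representations of $A(\mathbb{B}_{\mathcal{E}_2})$ with closed-ball tuples at the operator level, which is where the operator version of the Shalit--Shamovich theorem is needed, not just the matrix version.
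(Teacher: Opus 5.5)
Your proposal is correct and is essentially the paper's argument: you show the completely bounded identity forces $\rho_{\mathcal{E}_1}=\rho_{\mathcal{E}_2}$ on all operator tuples and then apply Corollary~\ref{cor:main_result}. Your combination of the operator-level Shalit--Shamovich theorem, the universal property of $\mathrm{OA}_u(\mathcal{E}^*)$ and Paulsen's similarity theorem just unpacks \cite[Lemmas 2.2 and 2.4]{SchererShalitShamovichspectral}, which the paper cites directly.

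One caveat: applying the open mapping theorem level by level gives no uniform bound, so it does not make the inverse identity completely bounded (compare the identity $\max(E)\to\min(E)$ for infinite-dimensional $E$). You should instead read (1) as the identity being completely bounded in both directions, which is also what the paper implicitly uses for its ``similar argument'' giving the reverse inequality.
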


\begin{proof}
    The implications (3)$\implies$(2)$\implies$(1) are clear. 

    Assume that (1) holds, we will follow the approach taken in \cite[Lemma 2.5]{SchererShalitShamovichspectral}. More precisely, we will show, that for all $d$-tuples of operators $X=(X_1,\dots,X_d)\in B(H)^d$ , $\rho_{\mathcal{E}_2}(X)<1$ implies $\rho_{\mathcal{E}_1}(X)\leq 1$. Since the spectral radii are homogeneous, this will then yield $\rho_{\mathcal{E}_1}(X)\leq \rho_{\mathcal{E}_2}(X)$. Denote by $Z_1,\dots,Z_d$ the coordinate functions in $A(\mathbb{B}_{\mathcal{E}_1})=A(\mathbb{B}_{\mathcal{E}_2})$. If $\rho_{\mathcal{E}_2}(X)<1$, then by \cite[Lemma 2.4]{SchererShalitShamovichspectral} the assignment $Z_j\mapsto X_j$ gives rise to a completely bounded unital homomorphism $\varphi:A(\mathbb{B}_{\mathcal{E}_2})\to B(H)$. Using (1), we get a completely bounded unital homomorphism 
    \begin{align*}
        A(\mathbb{B}_{\mathcal{E}_1})\overset{\mathrm{id}}{\longrightarrow} A(\mathbb{B}_{\mathcal{E}_2})\overset{\varphi}{\longrightarrow} B(H),
    \end{align*}
    mapping $Z_j$ to $X_j$. Thus, $\rho_{\mathcal{E}_1}(X)\leq 1$ by \cite[Lemma 2.2]{SchererShalitShamovichspectral} as desired.

    The inequality $\rho_{\mathcal{E}_2}(X)\leq \rho_{\mathcal{E}_1}(X)$ follows from a similar argument. Since the spectral radii are now shown to be equal on every operator $d$-tuple $X$, the identity $\mathcal{E}_1\to\mathcal{E}_2$ must be completely isometric by Corollary \ref{cor:main_result}. 
\end{proof}

\subsubsection*{Acknowledgments}
We would like to thank Robert T.W. Martin, Jeet Sampat and Orr Moshe Shalit for several helpful discussions and feedback on this article.

\subsubsection*{AI Disclosure}
ChatGPT 5.6 was used for literature searches, proofreading, and to help explore some of our ideas for answering Question \ref{question}. The article and its proofs are entirely written by the author.
\printbibliography

\end{document}